\newcommand{\bburl}[1]{\textcolor{blue}{\url{#1}}}
\newtheorem{thm}{Theorem}[section]
\theoremstyle{definition}
\theoremstyle{definition}
\newtheorem{defi}[thm]{Definition}
\theoremstyle{remark}
\newtheorem{rem}[thm]{Remark}
\newcommand\be{\begin{equation}}
\newcommand\ee{\end{equation}}
\newcommand\bee{\begin{equation*}}
\newcommand\eee{\end{equation*}}
\newcommand\ben{\begin{enumerate}}
\newcommand\een{\end{enumerate}}
\newcommand{\res}{\text{Res}}
\newcommand{\A}{\ensuremath{\mathbb{A}}}
\newcommand{\R}{\ensuremath{\mathbb{R}}}
\newcommand{\Z}{\ensuremath{\mathbb{Z}}}
\newcommand{\Q}{\mathbb{Q}}
\numberwithin{equation}{section}
\DeclareMathOperator{\tr}{tr}
\DeclareMathOperator{\Gal}{Gal}
\newcommand{\G}{\ensuremath{\mathbb{G}}}
\title[On smoothing singularities of elliptic orbital integrals on GL($n$)]
{On smoothing singularities of elliptic orbital integrals on GL($n$) and Beyond Endoscopy}
\author[O.E. Gonz\'alez]{Oscar E. Gonz\'alez}
\email{\textcolor{blue}{\href{mailto:oscar.gonzalez3@upr.edu}{oscar.gonzalez3@upr.edu}}}
\address{Department of Mathematics, University of Puerto Rico, R\'io Piedras, PR 00931}
\author[C.H. Kwan]{Chung Hang Kwan}
\email{\textcolor{blue}{\href{mailto:kevinkwanch@gmail.com}
{kevinkwanch@gmail.com}}}
\address{Department of Mathematics, the Chinese University of Hong Kong, Shatin, Hong Kong }
\author[S.J. Miller]{Steven J. Miller}
\email{\textcolor{blue}{\href{mailto:sjm1@williams.edu}{sjm1@williams.edu}},  \textcolor{blue}{\href{Steven.Miller.MC.96@aya.yale.edu}{Steven.Miller.MC.96@aya.yale.edu}}}
\address{Department of Mathematics and Statistics, Williams College, Williamstown, MA 01267}
\author[R. Van Peski]{Roger Van Peski}
\email{\textcolor{blue}{\href{mailto:rpeski@princeton.edu}
{rpeski@princeton.edu}}}
\address{Department of Mathematics, Princeton University, Princeton, NJ 08544}
\author[T.A. Wong]{Tian An Wong}
\email{\textcolor{blue}{\href{mailto:twong@columbia.edu}
{t.wong@columbia.edu}}}
\address{Max Planck Institut f\"ur Mathematik, Vivatsgasse 7, Bonn, 53111}
\subjclass[2010]{11F72 \and 11F66}
\keywords{Beyond Endoscopy, orbital integrals, approximate functional equation}
\date{\today}
\begin{document}

\begin{abstract}
Recent work of Altu\u{g} continues the preliminary analysis of Langlands' Beyond Endoscopy proposal for $GL(2)$ by removing the contribution of the trivial representation by a Poisson summation formula. We show that Altu\u{g}'s method of smoothing real elliptic orbital integrals by an approximate functional equation extends to $GL(n)$. We also discuss the case of an arbitrary reductive group, and remaining obstructions for applying Poisson summation.
\end{abstract}

\maketitle

\tableofcontents

\section{Introduction}

\subsection{Overview of Beyond Endoscopy}

One of the key conjectures of the Langlands Program is the Functoriality Conjecture: given two reductive groups $G'$ and $G$, and an $L$-homomorphism of the associated $L$-group $^{L} G'$ to $^{L} G$, one expects a transfer of automorphic forms on $G'$ to automorphic forms on $G$. Most cases of functoriality known today fall under the banner of endoscopy, that is, where $G'$ is an endoscopic group of $G$. The problem of endoscopy is addressed by the stable trace formula, recently made unconditional by Ng\^{o}'s solution of the Fundamental Lemma \cite{Ngo}. Anticipating this, Langlands proposed a new strategy to attack the general case, referred to as Beyond Endoscopy \cite{Lan1}.

If an automorphic form $\pi$ on $G$ is a functorial transfer from a smaller $G'$, then one expects the $L$-function $L(s,\pi,r)$ to have a pole at $s = 1$ for some representation $r$ of $^{L}G$. In particular, the order of $L(s,\pi,r)$ at $s=1$, which we denote by $m_r(\pi)$, should be nonzero if and only if $\pi$ is a transfer. Langlands' idea then is to weight the spectral terms in the stable trace formula by $m_r(\pi)$, resulting in a trace formula whose spectral side detects only $\pi$ for which the $m_r(\pi)$ is nonzero.

Since in general $L(s,\pi,r)$ is not a priori defined at $s=1$, we account for the weight factor by taking the residue at $s=1$ of the logarithmic derivative of $L(s,\pi,r)$.\footnote{Another possibility is to take the residue of $L(s,\pi,r)$ itself at $s=1$, in which case $m_r(\pi)$ is more complicated.} This should lead to an $r$-trace formula,
\be
\label{rtrace}
S^r_\text{cusp}(f) = \lim_{N\to \infty }\frac{1}{|V_N|}\sum_{v\in V_N}\log (q_v) S^1_\text{cusp}(f^r_v),
\ee
where $S^1_\text{cusp}$ represents the usual stable trace formula, $V_N$ is a finite set of valuations of the global field $F$, and $q_v$ the order of the residue field of $F_v$ which is less than $N$ (see \cite[\S2]{Art} for details).

Following Arthur \cite{Art}, the stable distribution should have a decomposition
\be
S^r_\text{cusp}(f) = \sum_{G'} \iota(r,G')\hat{P}^{\tilde{G}'}_\text{cusp}(f'),
\ee
where $\hat{P}^{\tilde{G}'}_\text{cusp}(f')$ are called primitive stable distributions on elliptic `beyond endoscopic' groups $G'$, and by primitive one means the spectral contribution to the stable trace formula of tempered, cuspidal automorphic representations that are not functorial images from some smaller group. These primitive distributions, giving a new {\em primitive} trace formula, are to be defined inductively, and one hopes to establish these from the $r$-trace formula.

As is usual with trace formulae, one would like both the $r$-trace formula and the primitive trace formula to be an identity of spectral and geometric sides. But since one only wants tempered automorphic representations to contribute to \eqref{rtrace}, one has to first remove the contribution of the nontempered representations. Inspired by work of Ng\^o, a suggestion was put forth in \cite{FLN} to apply Poisson summation to the elliptic contribution, over a linear space called the Steinberg-Hitchin base. Assuming the existence of a such a summation, they show that the dominant term should cancel with the contribution of the trivial representation, which can be viewed as the most nontempered representation.

In related work, the recent thesis of Altu\u{g} completes the preliminary analysis carried out in \cite{Lan1} for $GL(2)$ and the standard representation (\cite{Alt1}, see also \cite{Alt2}). Working over $\Q$, and restricting ramification to the infinite prime, Altu\u{g} applies a modified form of the Poisson summation described in \cite{FLN}  by expressing the volume factors as values of Hecke $L$-functions and a strategic application of the approximate functional equation, the singularities of the archimedean orbital integral can be overcome, which is necessary in order to apply Poisson summation.

By a detailed analysis, Altu\u{g} shows that not only does the trivial representation contribute to the dominant term of the dual sum, but also the continuous spectral term associated to the nontrivial Weyl element of $GL(2)$. Based on this, Arthur outlines in \cite{Art} a list of problems to be addressed in order to establish the primitive trace formula.

\subsection{Main result}

In this paper, we study to what extent Altu\u{g}'s method generalizes to a more general reductive group, focusing on Problem III discussed in \cite{Art}. In particular, we show that Altu\u{g}'s use of the approximate functional equation to smooth the singularities of real orbital integrals can be generalized to $GL(n)$. In particular, we apply Altu\u{g}'s method \cite{Alt1} to study the elliptic part of the trace formula of $G=GL(n)$,
\be
\sum_{\gamma \text{ ell}} \operatorname{meas}(\gamma)  \int_{G_{\gamma}(\A) \backslash G(\A)} f(g^{-1} \gamma g) dg,
\ee
where the sum is taken over elliptic conjugacy classes of $G(\Q)$. (See Section \ref{prelim} for precise definitions.) Choosing test functions as in \S2.1, we rewrite it as
\be
\label{eqn finalexpr}
\sum_{\pm p^{k}} \sum_{\tr(\gamma),\ldots, \tr(\gamma^{n-1})}
\frac{1}{|s_{\gamma}|}L(1,\sigma_{E})\theta_{\infty}^\pm(\gamma) \prod_{q}\operatorname{Orb}(f_{q};\gamma),
\ee
where $E$ is the extension of $\Q$ defined by the elliptic element $\gamma$, $\sigma_E$ the Galois representation appearing in the factorization $\zeta_{E}(s)=\zeta_{\Q}(s)L(s,\sigma_{E})$, and the product is taken over all primes $q$ of $\Q$. Here the $L$-value represents the global volume term meas$(\gamma)$ seen before.

We show that the approximate functional equation can be used again to smooth the singularities of the archimedean orbital integrals, generalizing \cite[Proposition 4.1]{Alt1}. The main result is the following.

\begin{thm}
\label{mainthm}
Let $\theta^\pm_ {\infty}(\gamma)$ be defined as in \eqref{thetapm}, $\phi$ any Schwartz function on $\mathbb{R}$, and $\alpha>0$.
Then the function defined by
\be
f(x_{1},\ldots, x_{n-1}) =  \theta_{\infty}^{\pm}(x_{1},\ldots, x_{n-1})\phi(\lvert D(x_{1},\ldots, x_{n-1})\rvert^{-\alpha})
\ee
is smooth.
\end{thm}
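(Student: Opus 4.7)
\medskip
\noindent\textbf{Proof plan.} The assertion is local, so it suffices to verify smoothness of $f$ in a neighborhood of each point of $\R^{n-1}$. Let $V := \{x \in \R^{n-1} : D(x) = 0\}$ denote the discriminant locus. Away from $V$, the archimedean orbital integral $\theta_\infty^\pm$ is smooth as a function of the coefficients of the characteristic polynomial $(x_1,\ldots,x_{n-1})$ (this is a classical consequence of the Weyl integration formula together with the Harish-Chandra theory of orbital integrals at regular semisimple elements), while $|D|^{-\alpha}$ is smooth on $\R^{n-1}\setminus V$ and $\phi$ is smooth on $\R$. Hence $f$ is manifestly smooth on $\R^{n-1}\setminus V$, and the content of the theorem concerns points of $V$.

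At $V$ the plan consists of two ingredients. The first is a \emph{tameness} bound for the singularities of $\theta_\infty^\pm$: for every multi-index $\beta$ there should exist $N_\beta \geq 0$ and a neighborhood $U$ of $V$ such that
\bee
|\partial^\beta \theta_\infty^\pm(x)| \;\lesssim_\beta\; |D(x)|^{-N_\beta} \quad \text{for } x \in U\setminus V.
\eee
For $n=2$ this reduces to the logarithmic-singularity analysis in \cite[\S4]{Alt1}. For general $n$ one would combine the explicit integral representation of $\theta_\infty^\pm$ introduced in \S\ref{prelim} with the stratification of $V$ by the partition governing the coincidence pattern of eigenvalues, and with the known asymptotic expansions of real orbital integrals near singular semisimple elements (Harish-Chandra, Bouaziz). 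The second ingredient is an \emph{infinite-order vanishing} statement for $h(x) := \phi(|D(x)|^{-\alpha})$: extended by zero across $V$, the function $h$ is smooth and every derivative $\partial^\gamma h$ vanishes faster than any power of $|D|$ as $x \to V$. Indeed, since $\phi$ is Schwartz, $\phi(t) = O(|t|^{-M})$ for every $M>0$, and by Fa\`a di Bruno each $\partial^\gamma h$ is a finite sum of products of derivatives $\phi^{(k)}(|D|^{-\alpha})$ with polynomials in $\partial^\delta(|D|^{-\alpha})$; the Schwartz factor defeats all the polynomial growth in $|D|^{-1}$ coming from the other pieces.

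Combining the two ingredients through the Leibniz rule,
\bee
\partial^\beta f(x) \;=\; \sum_{\beta' \leq \beta} \binom{\beta}{\beta'} \, \partial^{\beta'}\theta_\infty^\pm(x) \cdot \partial^{\beta-\beta'} h(x),
\eee
each summand is bounded near $V$ by $|D|^{-N_{\beta'}}$ times a function of super-polynomial decay in $|D|$. Each term therefore extends continuously (indeed smoothly) across $V$ with value zero, so $f \in C^\infty(\R^{n-1})$.

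\medskip
\noindent\textbf{Main obstacle.} The core difficulty is the tameness bound in Step~1. In Altu\u{g}'s $GL(2)$ treatment the discriminant locus is a single smooth stratum and the orbital integral can be written down explicitly, giving the logarithmic singularity directly. For $GL(n)$ the singular set $V$ has a nontrivial stratification indexed by the eigenvalue coincidence patterns, and on each stratum the centralizer $G_\gamma$ and the local model of the orbital integral change. Obtaining derivative bounds in terms of a single quantity $|D(x)|^{-N_\beta}$ that holds uniformly across all strata, rather than stratum-by-stratum bounds in the respective discriminants, is the principal technical point one must push through; once this is in place, the rest of the argument is the straightforward Leibniz/Fa\`a di Bruno reduction sketched above.
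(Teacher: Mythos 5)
Your plan is essentially the paper's own argument: smoothness off the discriminant locus via Harish-Chandra theory, polynomially bounded singularities of $\theta_\infty^\pm$ along $\{D=0\}$, and the super-polynomial decay of $\phi(|D|^{-\alpha})$ in $|D|$ forcing every derivative of $f$ to extend by zero across the singular set. The only real difference is bookkeeping: the paper takes the tameness of the orbital-integral singularities as known input from Harish-Chandra/Shelstad/Bouaziz (rather than flagging the derivative bounds as an open obstacle) and proves vanishing of all partials of $f$ at singular points directly by difference quotients and the mean value theorem, whereas you organize the same estimates through Leibniz and Fa\`a di Bruno.
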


\noindent In particular, we will take $\phi$ to be the cutoff functions $V_s$ and $V_{1-s}$ in the approximate functional equation for $L(1,\sigma_E)$, so that the product
	\be
	L(1,\sigma_E)\theta_\infty^\pm(\gamma)
    \ee
is such that the singularities of the archimedean orbital integral, which lie along the vanishing set of the discriminant of $\gamma$, can be controlled. For the cases $GL(n)$ with $n>3$, we require Artin's conjecture as a simplifying assumption.

This result represents a first step towards establishing \eqref{rtrace} for higher rank groups. In particular, in order to apply Poisson summation over the Steinberg-Hitchin base, the singularities of the real orbital integrals must be addressed. Regarding this point, the method of the approximate functional equation has been the most successful up to now. While our application of the approximate functional equation is not completely analogous to Altu\u{g}'s (see Section \ref{sec6} for more detailed discussion), we expect that the techniques and considerations of this paper will be valuable to further attempts to generalize Altu\u{g}'s method.


\subsection{Outline}

This paper is organized as follows. In Section \ref{prelim}, we introduce the necessary definitions and notation, and using the class number formula arrive at \eqref{eqn finalexpr}. In Section \ref{AFE}, we introduce the approximate functional equation for Artin $L$-functions. In Section \ref{Smoothing}, we describe the characterization of orbital integrals on real reductive groups, and prove Theorem \ref{mainthm}. Finally, in Section \ref{Directions} we give indications on how our analysis can be generalized to general reductive groups, using work of Ono and Shyr on Tamagawa numbers of algebraic tori. In Section \ref{sec6}, we explain how our theorem fits into the framework of generalizing Altu\u{g}'s analysis to $GL(n)$, and the remaining obstructions for applying Poisson summation.

\subsection{Acknowledgments}

The authors were supported by NSF Grant DMS1347804 and Williams College; the third named author was additionally supported by NSF grants DMS1265673 and DMS1561945. We thank James Arthur, Jasmin Matz, and Tasho Kaletha for helpful discussions, and S. Ali Altu\u{g} and Bill Casselman for comments on preliminary version of this paper. We also thank the referee for a careful reading of the paper and helpful suggestions.

\section{Preliminaries}\label{prelim}

\subsection{Notation}

We follow closely the setting of \cite{Lan1} and \cite{Alt1}. Let $G = GL(n)$ and $\A = \A_{\Q}$ be the ring of adeles of $\Q$. Denote by ${v}$ any valuation of $\Q$, $q$ any finite prime, and $p$ a fixed prime.

An element $\gamma\in G(\Q)$ is said to be \emph{elliptic} over $\Q$ if its  characteristic polynomial is irreducible over $\Q$. Throughout this paper, the word `elliptic' will be reserved for `elliptic over $\Q$'.
Let $Z_{+}$ be the set of all matrices in the center of $G(\mathbb{R})$ with positive entries, and $G_{\gamma}$ be the centralizer of
$\gamma$ in $G$.

The discriminant of $\gamma$ is given by
\be
D_\gamma = \prod_{i<j}(\gamma_{i}-\gamma_{j})^2,
\ee
where the $\gamma_{i}$'s are the distinct eigenvalues of $\gamma$. An elliptic element $\gamma$ defines, by its characteristic polynomial, a degree $n$ extension $E$ of $\Q$, such that
\be
D_{\gamma} = s_{\gamma}^2 D_{E}.
\ee
for some integer $s_\gamma$, and $D_E$ is the discriminant of $E$.

\begin{defi}
Now let $f=\prod_v f_v$ be a function in $C_{c}^{\infty}(G(\mathbb{A}))$. Define the global volume term
\be
\label{eqn meas}
 \operatorname{meas}(\gamma) =  \operatorname{meas}(Z_{+} G_{\gamma}(\Q) \backslash G_{\gamma}(\A))
\ee
and the orbital integral
\be
\label{eqn orb}
    \text{Orb}(f_{v};\gamma) =  \int_{G_{\gamma}(\Q_{v}) \backslash G(\Q_{v})} f_{v}(g^{-1} \gamma g) d{g}_{v}.
\ee
\end{defi}

Then the elliptic part of the Arthur-Selberg trace formula refers to
\be\label{eqn trace}
\sum_{\gamma \text{ ell}} \operatorname{meas}(\gamma)  \int_{G_{\gamma}(\A) \backslash G(\A)} f(g^{-1} \gamma g) dg
 = \
\sum_{\gamma \text{ ell}} \operatorname{meas}(\gamma) \prod_{v} \text{Orb}(f_{v};\gamma),
\ee
where the sum is understood to be over representatives $\gamma$ of elliptic conjugacy classes in $G(\Q)$. Note that since $G=GL(n)$, these orbital integrals are in fact stable distributions.

\begin{defi}
Fix a prime $p$ and integer $k\geq1$. The local test functions $f_v\in C_c^{\infty}(\mathbb{Q}_v)$ are chosen as follows: at the archimedean place, choose $f_{\infty}\in C_{c}^{\infty}(Z_+\backslash G(\mathbb{R}))$ such that its orbital integrals are compactly supported; at finite places, choose $f_q$ and $f_p^k$ such that $f$ is supported on the set $\gamma \in G(\mathbb{Z})$ with $|\det(\gamma)|=p^{k}$. Finally, we define $f^{p,k}$ by
\be
\label{eqn testfunc}
f^{p,k} :=  f_{\infty}\cdot f_p^{p,k}\cdot\prod_{\substack{q\neq p}}f_q^{p,k}.
\ee
\end{defi}

\begin{rem}
Note however, that this condition on the determinant is not necessary for our analysis, we only impose it to reflect more closely the setting of \cite[\S2]{Lan1} and \cite{Alt1}, where $f_p$ is chosen so that $\tr(\pi(f_p))=\tr(\text{Sym}^k(A(\pi_p)))$, where $A(\pi_p)$ is the Satake parameter of $\pi$ at $p$, and the rest meaning we only allow ramification at infinity.
\end{rem}


\subsection{Class number formula and measures}

Denote by $\mathbb{A}_{E}$ the adele ring of $E$, and $I_{E}=\mathbb{A}_{E}^{\times}$ the ideles of $E$. Let $|\,\cdot\,|_{v}$ be the normalized absolute value on the completion $E_{v}$ and $|\,\cdot\,|_{\mathbb{A}_{E}}:I_{E}\rightarrow\R^{\times}$ be the absolute value defined by
\be
|x|_{\mathbb{A}_{E}} =  \prod_{v}|x_{v}|_{v},
\ee
where $x=(x_{v})$. Here $|\,.\,|_{\mathbb{A}_{E}}$ is a group homomorphism and we define the norm-one idele group to be its kernel, denoted $I_{E}^{1}$, and $E^{\times}\backslash I_{E}^{1}$ the norm-one idele class group.

\begin{rem}
The measures and test functions in \eqref{eqn meas} and \eqref{eqn orb} are to be chosen analogously as in \cite{Alt1}. We mention the choices once again here. We first describe the choice of measure on $G$. At any finite prime $q$, we choose any Haar measure on $G(\mathbb{Q}_q)$ giving measure $1$ on $G(\mathbb{Z}_q)$, and the same with $G_\gamma(\Q_q)$; at infinity, we choose any Haar measure on $G(\R)$. In keeping with \cite[p.1797]{Alt1}, we note that there are more natural ways to normalize measures, but we make this choice so as to remain consistent with the hypotheses of \cite{Lan1,Alt1}.
\end{rem}

Let $\gamma$ be an elliptic element in $G(\Q)$. Recall that it defines a degree $n$ extension $E$ of $\Q$. It was observed by Langlands \cite[Equation (19)]{Lan1} that
\be
Z_{+}G_{\gamma}(\Q)\backslash G_{\gamma}(\A) = Z_{+}E^{\times}\backslash I_{E} = E^{\times}\backslash I_{E}^{1}.
\ee
Choosing the measures on $Z_{+}G_{\gamma}(\Q)\backslash G_{\gamma}(\A)$ as such, we also require the measure on $E^{\times}\backslash I_{E}^{1}$ to be such that
\be
\operatorname{meas}(Z_{+}G_{\gamma}(\Q)\backslash G_{\gamma}(\A)) = \operatorname{meas}(E^{\times}\backslash I_{E}^{1}).
\ee
so that we have by \cite[Theorem 4.3.2]{T}, the relation
\be\label{eqn tate}
\operatorname{meas}(E^{\times}\backslash I_{E}^{1}) = \frac{2^{r_1}(2\pi)^{r_2} h_{E} R_{E}}{w_{E}},
\ee
where $h_{E}$ is the class number of $E$, $R_{E}$ is the regulator of $E$, $r_{1}$ is the number of real embeddings of $E$ and $r_{2}$ is the number of pairs of complex embeddings of $E$, and $w_{E}$ is the number of roots of unity in $E$.

For a number field $E$, the Dedekind zeta function of $E$ is defined by
\be
\zeta_{E}(s) =  \sum_{\mathfrak{a}} \frac{1}{N_{E/\mathbb{Q}}(\mathfrak{a})^s} = \prod_{\mathfrak{p}}\frac{1}{1-N_{E/\mathbb{Q}}(\mathfrak{p})^s}
\ee
  for $\Re s>1$, where the sum is over all non-zero integral ideals of $E$ and the product is over all prime ideals of $E$.

Hecke showed that the Dedekind zeta function $\zeta_E(s)$ can be analytically continued to the whole complex plane except having a simple pole at $s=1$. Recall the classical Dirichlet class number formula
\be\label{eqn cnf}
\res_{s=1} \zeta_{E}(s) =
\frac{2^{r_1}(2\pi)^{r_2} h_{E} R_{E}}{w_{E}\sqrt{|D_E|}},
\ee
which will be related to the idele class group via \eqref{eqn tate}.

Here again $E$ is a number field. Let $\rho: \Gal(E/\mathbb{Q})\rightarrow \text{GL}(d,\mathbb{C})$ be a finite dimensional Galois representation. The Artin $L$-function associated to $\rho$ is given in terms of the Euler product:
\begin{equation}\label{eqn artin}
    L(s,\rho) =  \prod_{p} \det(I-\rho(\text{Fr}_{p})
    p^{-s})^{-1},
\end{equation}
  where $\text{Fr}_{p}$ is the Frobenius element in $\Gal(E/\Q)$.

In general, $E/\Q$ is not necessarily an Galois extension. In this case, the Artin $L$-function is obtained in the following manner: let $E^\mathrm{Gal}$ be the Galois closure of $E$ over $\Q$, $G=\Gal(E^\mathrm{Gal}/\Q)$, $H=\Gal(E^\mathrm{Gal}/E)$,
and $\rho_{G/H}$ be the permutation representation of $G$ on $G/H$. Then $\zeta_{E}(s)=L(s,\rho_{G/H})$. The representation can in turn be decomposed into irreducible representations $\rho_{i}$'s on $G$, i.e., $\rho_{G/H}= \oplus n_{i}\rho_{i}$, $n_{i} \ge 1$ are integers, and we have the factorization

\begin{equation}
\zeta_{E}(s) = \prod_{i} L(s,\rho_{i})^{n_{i}}. 
\end{equation}

Note that $\zeta_{E}(s)$ has a simple pole at $s=1$. We can also write $\zeta_{E}(s) = \zeta_{\mathbb{Q}}(s)L(s,\sigma_{E})$ for some representation $\sigma_{E}$. For more discussions on Artin $L$ functions, we refer the reader to \cite{IK, Mur, Neu}.

By equations \eqref{eqn tate} and \eqref{eqn cnf}, we have
\be
\operatorname{meas}(E^{\times}\backslash I_{E}^{1}) = \sqrt{|D_{E}|}\,\,L(1,\sigma_{E}),
\ee
and also
\be\label{CNFA}
\operatorname{meas}(Z_{+}G_{\gamma}(\Q)\backslash G_{\gamma}(\A)) = \sqrt{|D_{E}|}\,\,L(1,\sigma_{E}).
\ee

This is the form of the class number formula we shall use.

\subsection{Rewriting the elliptic term}

By the choice of test function $f$ , the right hand side of \eqref{eqn trace} is non-zero if and only if $|\det(\gamma)|_{q}=1$ for any finite prime $q\neq p$ and $|\det(\gamma)|_{p}=p^{-k}$. Therefore $\det(\gamma)\in \Z_{q}$ for any finite prime $q$ and $\det(\gamma)\in\Z$. Moreover, $|\det(\gamma)|=p^{k}$.

We parametrize $\gamma\in G(\Q)$ by the coefficients of its characteristic polynomial,
\be
X^n-a_1X^{n-1}+\dots+(-1)^{n}a_n
\ee
where
\be
\label{eq:ParamGamma}
(a_1,\dots,a_n) = (\tr(\gamma),\ldots,\tr(\gamma^{n-1}),\det(\gamma)).
\ee
By \eqref{eq:ParamGamma} and $\det(\gamma)=\pm p^k$, we write
\be
\label{thetapm}
\theta_{\infty}^{\pm}(\gamma) =\ \theta_{\infty}^\pm(a_1,\ldots,a_{n-1},\pm p^{k}),
\ee
where $\theta_{\infty}^\pm(\gamma)=|D_{\gamma}|^{1/2}\operatorname{Orb}(f_{\infty};\gamma)$, since $\theta_\infty$ is invariant under conjugation, therefore we can consider $\theta_{\infty}^{\pm}(\gamma)$ as a function on $\R^{n-1}$.

The right hand side of \eqref{eqn trace} then becomes
\be\label{eqn finalexpr2}
\sum_{\pm p^{k}} \sideset{}{'}\sum
\frac{1}{|s_{\gamma}|}L(1,\sigma_{E})\theta_{\infty}^\pm(\gamma) \prod_{q}\operatorname{Orb}(f_{q}^{p,k};\gamma),
\ee
where the inner sum is taken over those $(a_{1},\ldots, a_{n-1})$ over $\Z^{n-1}$ corresponding to elliptic elements.

\begin{rem}
The discriminant that appears in the class number formula is that of the number field, whereas the discriminant that appears in the orbital integral is that of the characteristic polynomial. We will see from the proof of Theorem \ref{smoothe} that the factor $|s_{\gamma}|$ does not play a significant role in the issue of smoothness. 
\end{rem}

\section{The approximate functional equation}\label{AFE}

In this section, we introduce the approximate functional equation of Artin $L$-functions, and show that the cutoff functions $V_{s}$ and $V_{1-s}$ in it are smooth and have good decay. For convenience, we assume the Artin conjecture, i.e., any Artin $L$-function attached to an irreducible, non-trivial and Galois representation of a number field is entire. Without the assumption, the following theorem is still valid, except that there would be additional terms accounting for possible contributions of poles of $L(s,\rho)$ along the lines $\Re(s)=0,1,$ which can be explicitly given.

\begin{thm}
\label{AFEartin}
Let $L(s,\rho)$ be the Artin $L$-function associated to an irreducible, non-trivial, Galois representation $\rho$ with conductor $q=q(\rho)$,
and assume the Artin Conjecture. Let $G(u)$ be any function which is holomorphic and bounded in the strip $-4 < \Re u < 4$, even, and normalized by $G(0)=1$. Let
$X>0$ be a parameter to be chosen. Then for $s$ in the strip $0 \leq \Re s \leq 1$, we have
\be
L(s,\rho) = \sum_{n} \frac{\lambda_{\rho}(n)}{n^s} V_{s}\left( \frac{n}{X\sqrt{q}}\right)
+\epsilon(s,\rho) \sum_{n} \frac{\bar{\lambda_{\rho}}(n)}{n^{1-s}}
V_{1-s}\left(\frac{nX}{\sqrt{q}}\right),
\ee
where
\begin{align}
\nonumber
L(s,\rho) &=  \sum_{n=1}^{\infty}\frac{\lambda_{\rho}(n)}{n^s}, \ \epsilon(s,\rho) =  \epsilon(\rho)q^{\frac{1}{2}-s}
\frac{\gamma(1-s,\rho)}{\gamma(s,\rho)};\\
\nonumber
V_{s}(y) &= \frac{1}{2\pi i} \int_{(3)} y^{-u} G(u) \frac{\gamma(s+u,\rho)}{\gamma(s,\rho)} \frac{du}{u}
\end{align}
and $\epsilon(\rho)$ is the root number of $L(s,\rho)$ and is a complex number with modulus $1$.
\end{thm}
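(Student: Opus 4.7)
The plan is the classical Mellin--contour proof. Assuming Artin's conjecture, the completed $L$-function
\bee
\Lambda(s,\rho)\ :=\ q^{s/2}\gamma(s,\rho)L(s,\rho)
\eee
is entire and satisfies the functional equation $\Lambda(s,\rho)=\epsilon(\rho)\Lambda(1-s,\bar\rho)$. I would consider the auxiliary integral
\bee
I(s)\ :=\ \frac{1}{2\pi i}\int_{(3)} L(s+u,\rho)\,\frac{\gamma(s+u,\rho)}{\gamma(s,\rho)}\,G(u)\,\bigl(X\sqrt{q}\bigr)^{u}\,\frac{du}{u},
\eee
and evaluate it in two different ways; comparing the answers will give the identity.

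On the contour $\Re u = 3$ the Dirichlet series for $L(s+u,\rho)$ converges absolutely, so (using the rapid decay of $G$ in vertical strips together with Stirling on the gamma ratio) I may interchange sum and integral to obtain
\bee
I(s)\ =\ \sum_{n\ge 1}\frac{\lambda_\rho(n)}{n^s}\,V_s\!\left(\frac{n}{X\sqrt{q}}\right),
\eee
which is the first sum in the theorem. Alternatively, I would shift the contour to $\Re u = -3$. Because $\Lambda(s+u,\rho)$ is entire (so any pole of $\gamma(s+u,\rho)$ is cancelled by a zero of $L(s+u,\rho)$) and $G$ is holomorphic in the strip, the only singularity crossed is the simple pole of $1/u$ at $u = 0$, with residue $L(s,\rho)$ since $G(0) = 1$. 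On the shifted contour I would substitute $u\mapsto -u$, use the evenness $G(-u)=G(u)$, and apply the functional equation in the form
\bee
L(s-u,\rho)\ =\ \epsilon(\rho)\,q^{1/2-s+u}\,\frac{\gamma(1-s+u,\bar\rho)}{\gamma(s-u,\rho)}\,L(1-s+u,\bar\rho).
\eee
The factor $\gamma(s-u,\rho)$ cancels the denominator of the gamma ratio, the powers of $q$ and $X$ recombine as $(nX/\sqrt{q})^{-u}$ after expanding $L(1-s+u,\bar\rho)$ into its Dirichlet series, and the prefactor $\epsilon(\rho)q^{1/2-s}$ groups with the remaining gamma quotient to form $\epsilon(s,\rho)$. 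This yields the second sum in the theorem.

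The main technical obstacle is justifying the contour shift uniformly for $s$ in the critical strip: convergence of the integrals on $\Re u = \pm 3$ requires combining convexity bounds for $L(s+u,\rho)$ with Stirling's asymptotic on the gamma ratio, and the decay of $G$ has to be fast enough to absorb both. A secondary bookkeeping point is that the gamma factor produced after the functional equation naturally involves $\bar\rho$ rather than $\rho$, so the formula for $V_{1-s}$ in the statement is to be read with $\rho$ replaced by $\bar\rho$ in the integrand (coinciding with the stated formula when $\rho$ is self-dual). Once this identity is in place, the smoothness and rapid decay of $V_s(y)$ needed for the application to Theorem~\ref{mainthm} follow by differentiating under the integral sign and by further contour shifts, rightward to obtain polynomial decay as $y \to 0$ and leftward as $y \to \infty$, again using Stirling uniformly on $0 \le \Re s \le 1$.
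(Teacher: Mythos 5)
Your argument is correct and is essentially the paper's own route: the paper simply invokes Theorem 5.3 of Iwaniec--Kowalski, and what you have written out is precisely the standard Mellin-integral/contour-shift proof of that theorem, applied to $\Lambda(s,\rho)$ under the Artin Conjecture. Your bookkeeping remark about $\bar\rho$ is harmless here, since the archimedean factors of an Artin $L$-function depend only on the multiplicities $d^{\pm}_v$ of the eigenvalues of $\rho(\sigma_v)$, which coincide for $\rho$ and $\bar\rho$, so $\gamma(s,\bar\rho)=\gamma(s,\rho)$ and the stated form of $V_{1-s}$ is as claimed.
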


\begin{proof}
See Theorem 5.3 of \cite{IK}.
\end{proof}

\subsection{Gamma factors of Artin $L$-functions}

The gamma factor of the Artin $L$-function is a product of local gamma factors $\gamma_{v}(s,\rho)$ over infinite places $v$ of a number field $E$. Let $r_{1}$ and $r_{2}$ be as before, so that $r_{1}+2r_{2}=n$, where $n$ is the degree of the number field $E$.
  Let $\sigma_{v}$ be the Frobenius conjugacy class associated to the completion $E_v$. Then $\sigma_{v}$ is of order $2$ if $v$ is a real place, and trivial if $v$ is complex.
Hence, we have
\be
\gamma_{v}(s,\rho) = \begin{cases}
\medskip
\pi^{-ds/2}  \Gamma\left(\frac{s}{2}\right)^{d} \Gamma\left(\frac{s+1}{2}\right)^{d} &\text{if $v$ is a complex place}\\
\pi^{-ds/2}  \Gamma\left(\frac{s}{2}\right)^{d^{+}_v} \Gamma\left(\frac{s+1}{2}\right)^{d^{-}_v}  &\text{if $v$ is a real place,}
\end{cases}
\ee
where $d = \deg(\rho)$ is the dimension of $\rho$ and $d^{+}_v$, $d^{-}_v$ are the multiplicities, respectively, of the eigenvalue $+1$, $-1$ for $\rho(\sigma_{v})$.

Denote by  $d^{+}, d^{-}$ the sum over real $v$ of $d^{+}_{v}$ and $d^{-}_{v}$, respectively. The global gamma factor of the Artin $L$-function is the following:
\begin{align}
\gamma(s,\rho) & =    \prod_{v|\infty} \gamma_{v}(s,\rho)\nonumber\\
& = \pi^{-ds r_{1}/2}\,\,\Gamma\left(\frac{s}{2}\right)^{d^{+}} \Gamma\left(\frac{s+1}{2}\right)^{d^{-}} \pi^{-ds r_{2}/2}\,\,\Gamma\left(\frac{s}{2}\right)^{dr_{2}}
\Gamma\left(\frac{s+1}{2}\right)^{dr_{2}}\nonumber\\
& = \pi^{-ds(r_{1}+r_{2})/2}\ \Gamma\left(\frac{s}{2}\right)^{d^{+}+dr_{2}}\Gamma\left(\frac{s+1}{2}\right)^{d^{-}+dr_{2}}.
\end{align}

As before, $E$ is the degree $n$ number field defined by an elliptic element in $GL_{n}(\Q)$. We have the factorization $\zeta_{E}(s)$ $=$ $\zeta_{\mathbb{Q}}(s)L(s,\sigma_{E})$ in which the degree of the representation $\sigma_{E}$ is $n-1$. If $\sigma_{E}$ is a reducible representation, then $L(s,\sigma_{E})$ further factorizes into a product of $L$-functions of non-trivial irreducible representations of $\text{Gal}(E/\Q)$. 

If $n=2$, we have $\zeta_{E}(s) =  \zeta_{\Q}(s)L(s,\chi)$ for some non-trivial quadratic Dirichlet character $\chi$, where $\zeta_{\Q}$ is the Riemann zeta function. The further analysis is completed in \cite{Alt1}. It is a classical result, due to Brauer, stating that if $E/ \Q$ is an abelian extension, then $\zeta_{E}(s)$ is indeed a product of Dirichlet $L$-functions attached to distinct primitive characters and exactly one of the character is trivial.

Now suppose $n=3$ and $E$ is a cubic field. Then the degree of the Galois representation $\sigma_{E}$ is 2. Either $(r_{1},r_{2})=(1,1)$ or $(r_{1},r_{2})=(3,0)$. We consider both cases as follow.

\ben
\item
Case (1): $(r_{1},r_{2})=(1,1)$. The cubic extension $E/\Q$ is not Galois, and $\sigma_{E}$ is an irreducible representation.
\item
Case (2): $(r_{1},r_{2})=(3,0)$. The cubic extension is Galois, and $\sigma_{E}$ is a reducible representation, decomposing as $\chi$ and $\chi^{-1}$, where $\chi$ is the cubic character. The conjugacy class is order two and our character is order three, so we only get $d^{+} = 2$ and $d^{-} = 0$. Hence, we have the factorization: $\zeta_{E}(s) = \zeta_{\Q}(s)L(s,\chi)L(s,\chi^{-1})$, where $L(s,\chi)$ is the Hecke $L$-function associated to the cubic character $\chi$ defined by $E$. In fact, $L(s,\chi)L(s,\chi^{-1})$ can be written as the following Euler product
\begin{equation*}
    \prod_{\chi(p)=1} \frac{1}{(1-p^{-s})^2} \prod_{\chi(p)\neq 1} \frac{1}{1+p^{-s}+p^{-2s}}.
\end{equation*}
\een

One knows that the irreducible two-dimensional representation of $\Gal(E/\Q)=S_3$ corresponds to a modular form, hence $L(s,\sigma_{E})$ is entire in Case (1). Case (2) is simpler, and is known by class field theory. For $n>3$, there could be Artin $L$-functions associated to irreducible representations of degree greater than 2 in which assuming the Artin conjecture is necessary.

Having an explicit expression of the Euler product \eqref{eqn artin} just like the one above would be a crucial step for the combination of the volume term with the $q$-adic orbital integrals in order to form a `modified' $L$-function, such as in \cite[p.17]{Alt2}, that would be a weighted sum of $L$-functions associated to the quadratic residue symbol.

\subsection{The cutoff functions}

We now examine the cutoff functions in the approximate functional equation of Artin $L$ functions for general Galois representation $\rho$ with degree $d$. From equation \eqref{eqn finalexpr2}, we only need the approximate functional equation for the case $s=1$. Hence in the following we restrict $s$ to be a real number in the range $[0,1]$.

\begin{thm}
Let $s$ be a real number in the range $[0,1]$. Then the functions $V_{s}$ and $V_{1-s}$ are smooth functions with the decay property that for any real number $m>3$, as $y\to\infty$, we have
\begin{equation}
V_{s}(y), V_{1-s}(y) \ll y^{-m}.
\end{equation}
\end{thm}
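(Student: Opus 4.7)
The plan is to prove both claims directly from the Mellin--Barnes representation
\[
V_{s}(y) \ =\ \frac{1}{2\pi i} \int_{(3)} y^{-u} G(u) \frac{\gamma(s+u,\rho)}{\gamma(s,\rho)} \frac{du}{u}.
\]
The crucial analytic input is Stirling's formula: on any fixed vertical line $\Re u = \sigma$ for which $\sigma + s$ avoids the poles of the gamma factors, each $\Gamma$-factor appearing in $\gamma(s+u,\rho)$ satisfies $|\Gamma((s+u+c)/2)| \ll (1+|\Im u|)^{A} e^{-\pi |\Im u|/4}$. Since $\gamma(s+u,\rho)$ is a finite product of such factors (with total multiplicity $d(r_1+2r_2)$) and $\gamma(s,\rho)$ is a nonzero constant in $u$, the quotient $\gamma(s+u,\rho)/\gamma(s,\rho)$ enjoys exponential decay in $|\Im u|$ along the line, with only polynomial prefactors.

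For smoothness, I would differentiate under the integral. Since $\partial_y^k y^{-u} = (-1)^k u(u+1) \cdots (u+k-1)\, y^{-u-k}$, the candidate $k$-th derivative is
\[
V_s^{(k)}(y) \ =\ \frac{(-1)^k}{2\pi i} \int_{(3)} u(u+1)\cdots(u+k-1)\, y^{-u-k} G(u) \frac{\gamma(s+u,\rho)}{\gamma(s,\rho)} \frac{du}{u}.
\]
The extra polynomial factor in $|u|$ is absorbed by the Stirling exponential decay, so each integral converges absolutely and uniformly on compact subsets of $y > 0$. This justifies the differentiation and yields $V_s \in C^\infty$.

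For the decay, I would shift the contour from $\Re u = 3$ to $\Re u = m$ for $m > 3$. Three observations secure holomorphy of the integrand in the strip between the contours: (a) $G$ is holomorphic in $-4 < \Re u < 4$ by hypothesis (choosing $G$ entire, e.g.\ $G(u)=e^{u^2}$, removes the ceiling $m<4$); (b) the $\Gamma$-factors in $\gamma(s+u,\rho)$ have poles only when the argument is a non-positive real, which never occurs for $s \geq 0$ and $\Re u > 0$; (c) the only pole of $1/u$ sits at $u=0$, to the left of both contours. The horizontal connecting pieces at $\Im u = \pm T$ vanish as $T \to \infty$ by the Stirling decay, so
\[
V_s(y) \ =\ \frac{1}{2\pi i} \int_{(m)} y^{-u} G(u) \frac{\gamma(s+u,\rho)}{\gamma(s,\rho)} \frac{du}{u} \ \ll\ y^{-m},
\]
since $|y^{-u}| = y^{-m}$ on the shifted line and the remaining integral converges absolutely. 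The identical argument with $s$ replaced by $1-s \in [0,1]$ handles $V_{1-s}$.

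The main technical point is the uniform control of $\gamma(s+u,\rho)/\gamma(s,\rho)$ on vertical lines, needed to justify both the differentiation and the vanishing of the horizontal segments in the contour shift. This is a routine consequence of Stirling's formula, so no deeper obstacle is expected.
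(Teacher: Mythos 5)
Your proposal is correct and follows essentially the same route as the paper: smoothness by differentiation under the integral sign (justified by the exponential decay of the gamma quotient on vertical lines), and the decay bound by shifting the contour from $\Re u=3$ to $\Re u=m$, using Stirling's formula to kill the horizontal segments and to bound the shifted vertical integral by $y^{-m}$ times an absolutely convergent integral. The only cosmetic difference is that the paper simply sets $G\equiv 1$ to avoid the strip restriction $-4<\Re u<4$, whereas you keep $G$ general and note that an entire, rapidly decaying choice removes the ceiling $m<4$; both handle the same issue.
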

\begin{proof}
It is obvious that $V_{s}$ is a smooth function by differentiation under the integral sign. Next we consider its decay. The choice of the bounded holomorphic function $G$ in the approximate functional equation will not be important to us, hence we will simply assume that $G(u)$ is identically $1$. Then
\be
V_{s}(y)  =  \frac{1}{2\pi i  \gamma(s,\rho)}
\int_{(3)} y^{-u}
\frac{\Gamma\left(\frac{s+u}{2}\right)^{a}
\Gamma\left(\frac{s+u+1}{2}\right)^{b}}{u\pi^{d(s+u)(r_{1}+r_{2})/2}}  du,
\ee
where $a:=d^{+}+dr_{2}$ and $b:=d^{-}+dr_{2}$.

We will need the following form of Stirling's approximation. Let $u=\sigma+it$, $-\infty<\sigma_{1}\le \sigma\le \sigma_{2}<+\infty$ and $|t|\ge 1$.  Then we have
\be
\label{Stirling}
|\Gamma(u)|  =\   (2\pi)^{1/2} |t|^{\sigma - \frac{1}{2}} e^{-\pi|t|/2}(1+O(|t|^{-1})).
\ee
\noindent uniformly in  $-\infty<\sigma_{1}\le \sigma\le \sigma_{2}<+\infty$ as $|t|\to\infty$.

 We would like to do a contour shift from $(3)$ to $(m)$ by considering the rectangle of height $2T$ that is symmetric about the $x$-axis with vertical sides at $(3)$ and $(m)$. First we treat the integral along the top part of the contour $[3+iT,m+iT]$.  We remark that the implicit constants in the estimations below may depend on $s$ and $\rho$. The calculations for the bottom contour follows similarly and therefore will be omitted.

First, we have
\begin{align*}
&\int_{3+iT}^{m+iT} y^{-u} \frac{\Gamma\left(\frac{s+u}{2}\right)^{a}
	\Gamma\left(\frac{s+u+1}{2}\right)^{b}}{u\pi^{d(s+u)(r_{1}+r_{2})/2}}  du\\	
 \ll & \int_{3}^{m} y^{-\sigma}
\big|\Gamma\left(\frac{s+\sigma}{2} + i\frac{T}{2}\right)\big|^{a}
\big|\Gamma\left(\frac{s+\sigma+1}{2} + i\frac{T}{2}\right)\big|^{b}
\pi^{-d(s+\sigma)(r_{1}+r_{2})/2}  \frac{d\sigma}{\sqrt{\sigma^2+T^2}}.
\end{align*}
Then applying Stirling, this is
\[
\ll\int_{3}^{m} y^{-\sigma}
\left( \sqrt{2\pi}  \left(\frac{T}{2}\right)^{\frac{s+\sigma-1}{2}}  e^{-\pi T/4} \right)^{a} \left( \sqrt{2\pi} \left(\frac{T}{2}\right)^{\frac{s+\sigma}{2}}  e^{-\pi T/4} \right)^{b}
 \frac{\pi^{-d\sigma (r_{1}+r_{2})/2}}{\sqrt{\sigma^2+T^2}} d\sigma,
 \]
 and thus
 \begin{align}
&  \ll   T^{\frac{(a+b)s-a}{2}-1}e^{-\frac{(a+b)\pi T}{4}} \int_{3}^{m}  \big(\frac{T^{(a+b)/2}}{2^{(a+b)/2}\ y \pi^{d(r_{1}+r_{2})/2} }\big)^{\sigma} d\sigma \nonumber\\
& \ll   \frac{T^{\frac{(a+b)m+b}{2}-1}}{y^m \log T} e^{-\frac{(a+b)\pi T}{4}}.
\end{align}

Next we treat the integral along the right vertical contour $[m-iT, m+iT]$. We first write
\be
\int_{m-iT}^{m+iT} y^{-u}
\frac{\Gamma\left(\frac{s+u}{2}\right)^{a}
\Gamma\left(\frac{s+u+1}{2}\right)^{b}}{u\pi^{d(s+u)(r_{1}+r_{2})/2}}  du
=
\int_{-T}^{T} y^{-m-it} \frac{\Gamma(\frac{s+m+it}{2})^a \Gamma(\frac{s+m+it+1}{2})^b}{(m+it)\pi^{d(s+m+it)(r_{1}+r_{2})/2}}  idt.
\ee
Then take absolute values and apply again Stirling's formula, we have
\begin{align}
&  y^{-m} \int_{-T}^{T} \frac{1}{\sqrt{m^2+t^2}} \big|\Gamma\left(\frac{s+m}{2} + i\frac{t}{2}\right)\big|^{a}
\big|\Gamma\left(\frac{s+m+1}{2} + i\frac{t}{2}\right)\big|^{b} dt \nonumber\\
&  \ll  y^{-m} + y^{-m} \int_{c}^{T} \frac{1}{\sqrt{m^2+t^2}}\big( \sqrt{2\pi}  \left(\frac{t}{2}\right)^{\frac{s+m-1}{2}}  e^{-\pi t/4} \big)^{a} \big( \sqrt{2\pi}  \left(\frac{t}{2}\right)^{\frac{s+m}{2}}  e^{-\pi t/4} \big)^{b} dt \nonumber\\
& \ll y^{-m}\big(1+  \int_{c}^{T} t^{(s+m)(a+b)/2-a/2-1} e^{-\pi t (a+b)/4} dt \big)
\end{align}
for some constant $c>0$ large enough.

We note that the integrand defining $V_s$ has no poles in the region $\Re{u}>3$ since $\Gamma(z)$ is holomorphic on $\Re{z}>0$. Therefore by Cauchy's Theorem, we have
\begin{align*}
&\int_{3-iT}^{3+iT} y^{-u}
\frac{\Gamma\left(\frac{s+u}{2}\right)^{a}
\Gamma\left(\frac{s+u+1}{2}\right)^{b}}{u\pi^{d(s+u)(r_{1}+r_{2})/2}}  du \\
& \ll y^{-m}\big(1+  \int_{c}^{T} t^{(s+m)(a+b)/2-a/2-1} e^{-\pi t (a+b)/4} dt \big) +\frac{T^{\frac{(a+b)m+b}{2}-1}}{y^m \log T} e^{-\frac{(a+b)\pi T}{4}}.\nonumber\\
\end{align*}
It is clear that the integral
\begin{equation}
\int_{c}^{\infty} t^{(s+m)(a+b)/2-a/2-1} e^{-\pi t (a+b)/4} dt
\end{equation}
converges. Therefore, letting $T\to\infty$, we have
\begin{equation}
\int_{(3)} y^{-u}
\frac{\Gamma\left(\frac{s+u}{2}\right)^{a}
\Gamma\left(\frac{s+1+u}{2}\right)^{b}}{u\pi^{d(s+u)(r_{1}+r_{2})/2}}  du \ll \frac{1}{y^{m}}
\end{equation}
and the result follows.

\end{proof}

\section{Smoothing of the real orbital integral}\label{Smoothing}

We follow the exposition of \cite[\S 4.2--4.3]{Alt2} and substitute the approximate functional equation of Artin $L$-function into equation \eqref{eqn finalexpr2}. This illustrates that the techniques used in \cite{Alt1, Alt2} to handle the singularities of the real orbital integral hold for $GL(n)$.

Suppose $\sigma_{E} \ = \ \rho_{1} \oplus \cdots  \oplus \rho_{t}$, where $\rho_{1},\dots, \rho_{t}$ are non-trivial irreducible representations on $\Gal(E^\mathrm{Gal}/\Q)$. Then by the absolute convergence of the series in the approximate functional equation, we may expand the product of Artin $L$-functions as follows

\begin{equation}
L(1,\sigma_{E})  =  \sum_{B\subset\{1,\ldots, t\}} \sum_{n_{1},\ldots,n_{t}} \prod_{i\in B} \frac{\lambda_{\rho_{i}}(n_{i})}{n_{i}} V_{1}^{(i)}\left(\frac{n_{i}}{X_{i}\sqrt{q_{i}}}\right) \prod_{i\not\in B} \epsilon(1,\rho_{i}) \overline{\lambda_{\rho_{i}}(n_{i})} V_{0}^{(i)}\left(\frac{n_{i} X_{i}}{\sqrt{q_{i}}}\right).
\end{equation}

\subsection{The singularities}
\label{sing}

We write the elliptic contribution as
\begin{align}
\sum_{\pm p^{k}} \sideset{}{'}\sum
&\left(\sum_{B\subset\{1,\ldots, t\}} \sum_{n_{1},\ldots,n_{t}} \prod_{i\in B} \frac{\lambda_{\rho_{i}}(n_{i})}{n_{i}} V_{1}^{(i)}\left(\frac{n_{i}}{X_{i}\sqrt{q_{i}}}\right) \prod_{i\not\in B} \epsilon(1,\rho_{i}) \overline{\lambda_{\rho_{i}}(n_{i})} V_{0}^{(i)}\left(\frac{n_{i} X_{i}}{\sqrt{q_{i}}}\right)\right)\nonumber\\
&\times\frac{1}{|s_{\gamma}|}\theta_{\infty}^\pm(\gamma) \prod_{q}\operatorname{Orb}(f_{q}^{p,k};\gamma).
\end{align}

Again by absolute convergence, we may switch the order of the summations,  yielding the following expression

\begin{align}\label{coeff}
\sum_{\pm p^{k}}\sum_{B\subset\{1,\ldots,t\}}\sum_{n_{1},\dots, n_{t}} \prod_{j\in B} \frac{1}{n_{j}} \sideset{}{'}\sum \bigg( \frac{1}{|s_{\gamma}|} \prod_{i\in B} \lambda_{\rho_{i}}(n_{i})\times \prod_{i\not\in B} \epsilon(1,\rho_{i}) \overline{\lambda_{\rho_{i}}(n_{i})}\ \times \nonumber\\
\prod_{i\in B } V_{1}^{(i)}\left(\frac{n_{i}}{X_{i}\sqrt{q_{i}}}\right) \times \prod_{i\not\in B} V_{0}^{(i)}\left(\frac{n_{i} X_{i}}{\sqrt{q_{i}}}\right) \theta_{\infty}^\pm(\gamma) \prod_{q}\operatorname{Orb}(f_{q}^{p,k};\gamma) \bigg)
\end{align}

\noindent and we would like to focus on the innermost sum $\sum^{'}$.

For $i=1,\ldots, t$, plug-in $X_{i}=|D_{E^\mathrm{Gal}}|^{c}$ with $0<c<1/2$ and the expressions of the conductors $q_{i}$ (c.f. pp.142 of \cite{IK}) are given by
\begin{equation}
q_{i}=|D_{E^\mathrm{Gal}}|^{d_{i}}N_{E^\mathrm{Gal}/\mathbb{Q}}(\mathfrak{f}_{i}),
\end{equation}
with $d_{i}$ being the degree of $\rho_{i}$ and  $\mathfrak{f}_{i}$ being the Artin conductor, which is a non-zero integral ideal of $E^\mathrm{Gal}$.

Referring the discussion preceding Section \ref{AFE}, we now turn to the smoothness of the functions
\begin{equation}\label{eqn prod1}
V_{1}^{(i)}\left( \frac{n_{i} }{|D_{E^\mathrm{Gal}}|^{\frac{d_{i}}{2}+c}\sqrt{N_{E^\mathrm{Gal}/\mathbb{Q}}(\mathfrak{f}_{i})}}\right)\theta_{\infty}^\pm(\gamma) ,
\end{equation}
and
\begin{equation}\label{eqn prod2}
V_{0}^{(i)}\left(\frac{n_{i}}{|D_{E^\mathrm{Gal}}|^{\frac{d_{i}}{2}-c}\sqrt{N_{E^\mathrm{Gal}/\mathbb{Q}}(\mathfrak{f}_{i})}}\right)\theta_{\infty}^\pm(\gamma).
\end{equation}

Since $D_{\gamma}$, $D_{E^\mathrm{Gal}}$ differ only by a non-zero integral multiple and $N_{E^\mathrm{Gal}/\mathbb{Q}}(\mathfrak{f}_{i})$ are  positive integers, the smoothing of the expressions \eqref{eqn prod1} and \eqref{eqn prod2} now follow as an immediate application of Theorem \ref{smoothe}. On the other hand, the behaviour of $V_0$ and $V_1$ are discussed in the previous section, so it remains to understand the singularities of $\theta_{\infty}^\pm(\gamma)$. Consider the real orbital integral $\operatorname{Orb}(f_{\infty};\gamma)$. Let $\gamma$ be a regular element in $G$, which is an $n$ by $n$ matrix with distinct eigenvalues, and $T_\mathrm{reg}$ be the set of all regular elements in $T=G_\gamma$. Also let $f_{\infty}\in C_{c}^{\infty}(Z_{+}\setminus G(\mathbb{R}))$.
Then it is well known that
 \be
 \theta_{\infty}^\pm(\gamma)=|D_{\gamma}|^{1/2}\operatorname{Orb}(f_{\infty};\gamma)
 \ee
 extends smoothly to $T_\mathrm{reg}$. This follows from the proof of Theorem 17.1 of \cite[p.145]{HC1}, and in particular Lemma 17.4 of \cite[p.147]{HC1} and Lemma 40 of \cite[p.491]{HC2}. Our function $\theta^\pm_\infty(\gamma)$ is related to the transform ${'F}_f^T$ of Harish-Chandra by a function of $\gamma$, which does not affect our analysis.

Notice that $z\in T_\mathrm{reg}$ if and only if $D_{\gamma}$ is nonzero. In other words, $\text{Orb}(f_{\infty};\gamma)$ may have singularities on $T\setminus T_\mathrm{reg}$. Depending on the root, one either has a removable singularity or the `jump relations' of Harish-Chandra along the walls of $T$, though we do not need their precise form here (c.f. \cite{Sh1,Sh2, Bou}). Indeed, given $f(x)g(1/D(x))$, with $g(x)$-Schwarz class, as long as the singularities of the function $f(x)$ is on the zero locus of $D(x)$ and are not worse than $x^{-\beta}$ for some $\beta\geq 0$, then the product function is smooth.\footnote{We thank S.A. Altu\u{g} for pointing this out to us.} Note also that for $p$-adic orbital integrals, a similar behavior may also occur, but again, we do not consider them in this paper.

\subsection{Smoothing}

The main theorem follows now from the preceding discussion.

\begin{thm}\label{smoothe}
Let $\phi$ be any Schwartz function on $\mathbb{R}$
and $\alpha$ be a positive constant.
Then the function defined by
\be
f(x_{1},\ldots, x_{n-1}) =\  \theta_{\infty}(x_{1},\ldots, x_{n-1})\phi(\lvert D(x_{1},\ldots, x_{n-1})\rvert^{-\alpha})
\ee
is smooth.
\end{thm}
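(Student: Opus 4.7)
The strategy is to treat the regular set $\{D\neq 0\}$ and the singular locus $Z:=\{x\in\R^{n-1}:D(x)=0\}$ separately, then show that $f$, extended by zero across $Z$, is $C^\infty$ on all of $\R^{n-1}$. On the complement of $Z$, the map $x\mapsto |D(x)|^{-\alpha}$ is $C^\infty$ and $\theta_\infty$ is smooth by the Harish-Chandra results recalled in Section \ref{sing}, so $f$ is plainly smooth there. The whole content of the theorem is thus smoothness across $Z$.

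The core observation is that Schwartz decay of $\phi$ transforms into infinite-order vanishing under the substitution $y=|D|^{-\alpha}$. Concretely, the estimate $|\phi^{(k)}(y)|\ll_{k,N}(1+|y|)^{-N}$ for all $k,N$ gives
\be
\bigl|\phi^{(k)}(|D(x)|^{-\alpha})\bigr| \ \ll_{k,N}\ |D(x)|^{\alpha N}
\ee
as $D(x)\to 0$. A chain-rule computation, in which the bounded factors $\operatorname{sgn}(D)^j$ arising from differentiating $|D|$ cause no difficulty (they only appear multiplicatively and are of modulus one), then yields, for every multi-index $K$ and every $N>0$,
\be
\bigl|\partial^K\!\bigl[\phi(|D|^{-\alpha})\bigr](x)\bigr| \ \ll_{K,N}\ |D(x)|^{N}
\ee
locally near $Z$. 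In other words, $\phi(|D|^{-\alpha})$ extends to a $C^\infty$ function on $\R^{n-1}$ that vanishes to infinite order on $Z$.

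On the other hand, the Harish-Chandra analysis (Theorem 17.1 and Lemma 17.4 of \cite{HC1} and Lemma 40 of \cite{HC2}, as summarized in Section \ref{sing}) shows that $\theta_\infty$ is smooth on $\R^{n-1}\setminus Z$ with only polynomial-type singularities across $Z$: for every multi-index $J$ there exists $\beta_J\geq 0$ with $|\partial^J\theta_\infty(x)|\ll_J|D(x)|^{-\beta_J}$ near $Z$. Combining this with the Leibniz rule
\be
\partial^I f \ =\ \sum_{J+K=I}\binom{I}{J}\,\partial^J\theta_\infty\cdot\partial^K\!\bigl[\phi(|D|^{-\alpha})\bigr]
\ee
and choosing $N>\beta_J$ in each summand gives $|\partial^I f(x)|\ll|D(x)|^{N-\beta_J}\to 0$ as $x\to Z$, for every multi-index $I$. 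A short induction on $|I|$ using the fundamental theorem of calculus then confirms that the partial derivatives of the extended $f$ exist, are continuous, and take the value zero on $Z$; hence $f\in C^\infty(\R^{n-1})$.

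The main obstacle is the tempered growth estimate $|\partial^J\theta_\infty|\ll|D|^{-\beta_J}$ for all $J$, i.e.\ the statement that not only $\theta_\infty$ but also all of its derivatives blow up at most polynomially in $|D|^{-1}$ along the walls of the Cartan. This is implicit in the Harish-Chandra description of $\theta_\infty$ as the orbital integral of a compactly supported smooth function, but making it quantitative requires a careful reading of the cited lemmas. Once granted, the remaining chain-rule and Leibniz bookkeeping is routine, and no feature of the $GL(n)$ setup beyond these general real-orbital-integral estimates enters the argument.
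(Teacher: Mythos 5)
Your proposal is correct in outline and follows the same basic strategy as the paper: smoothness is clear off $\{D=0\}$, and across the singular locus the Schwartz decay of $\phi$ composed with $|D|^{-\alpha}$ supplies vanishing of order $|D|^{M\alpha}$ for every $M$, which dominates the polynomially bounded singularities of $\theta_{\infty}$, so all derivatives of the product tend to zero there and the extension by zero is smooth. Where you genuinely diverge is in the execution. The paper never differentiates $\theta_{\infty}$: it uses only the bound $|\theta_{\infty}(x)|\ll|x-a|^{-\beta}$ near a singular point $a$, proves $f(x)\to 0$ by comparing $D(x)$ with its linearization $Q(x-a)$, and gets the first partials at $a$ to vanish from difference quotients together with the mean value theorem applied to $D$, dispatching higher orders with a terse ``carry out the argument inductively.'' You instead run the Leibniz rule, which forces the stronger hypothesis $|\partial^{J}\theta_{\infty}|\ll_{J}|D|^{-\beta_{J}}$ for every multi-index $J$; you rightly flag this as the crux. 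The trade-off: your route asks more of the Harish-Chandra input but then yields every order of differentiability transparently, while the paper's first-order step is self-contained, yet its inductive step, if written out, would need essentially the same control of the derivatives of $\theta_{\infty}$ near the walls that you assume --- and that control does hold, since by the results cited in Section \ref{sing} the normalized orbital integral and its derivatives extend continuously from each connected component of $T_{\mathrm{reg}}$ up to its closure (jump relations notwithstanding), so they are in fact locally bounded rather than merely of polynomial growth in $|D|^{-1}$. One point to tighten in your final step: when you invoke the fundamental theorem of calculus to propagate derivatives across $Z$, a segment ending on $Z$ may meet $Z$ in a complicated set, so it is cleaner to argue as the paper does, via difference quotients at points of $Z$ combined with the quantitative bound $|\partial^{I}f(x)|\ll d(x,Z)^{N}$ (which follows from your estimates since $|D(x)|\ll d(x,Z)$ locally); with that phrasing your induction on $|I|$ closes.
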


\begin{proof}
 By the discussions precede Theorem \ref{smoothe} and the fact that $\phi$ is a Schwartz function, we have $f$ being a smooth function on $T_{\text{reg}}$.

Also, the singularities of the real orbital integral must lie on the zero locus of $D(x_{1},\ldots,x_{n-1})$ and they are not worse than $|x|^{-\beta}$ for some $\beta \ge 0$ ($\beta$ may depend on the choice of point of singularity). Now fix a singular point $a=(a_{1},\ldots, a_{n-1})\not\in T_{\text{reg}}$. As $x=(x_{1},\ldots, x_{n-1})\to a=(a_{1},\ldots, a_{n-1})$, we have  $|D(x)|^{-\alpha} \to \infty$ and
\begin{equation}
\phi(|D(x)|^{-\alpha}) \ \ll_{M} \ |D(x)|^{M\alpha}
\end{equation}
for any positive constant $M$. Then choose any $M >  (\beta+1)/ \alpha$, we have

\begin{align}
|f(x)| \ &\ll \ |x-a|^{-\beta}|D(x)|^{M\alpha } \ \le \ |x-a|^{-\beta}(|D(x)-Q(x-a)|+|Q(x-a)|)^{M\alpha} \nonumber\\
\ &\ll \ |x-a|^{-\beta} |D(x)-Q(x-a)|^{M\alpha} + ||Q||^{M\alpha}|x-a|^{M\alpha-\beta}\nonumber\\
\ &= \ |D(x)-Q(x-a)|^{M\alpha-\beta} \bigg(\frac{|D(x)-D(a)-Q(x-a)|}{|x-a|}\bigg)^{\beta}\nonumber\\
&\ \ \ + ||Q||^{M\alpha}|x-a|^{M\alpha-\beta}\nonumber,\\
\end{align}
where $Q$ is the Jacobi matrix of $D$ at $x=a$ and $||Q||$ denotes the matrix $2$-norm of $Q$. By the continuity and differentiability of $D$, we have the last expression tends to $0$ as $x\to a$. Therefore,
\begin{equation}
\lim_{x\to a} f(x) =  0
\end{equation}
and we can redefine $f(a)=0$.

Let $h\neq 0$. By mean value theorem, we have

\begin{align}
\bigg|\frac{f(a_{1}+h,\ldots, a_{n-1})-f(a_{1},\ldots, a_{n-1})}{h}\bigg|  &\ll  |h|^{-1-\beta}|D(a_{1}+h,\ldots,a_{n-1})|^{M\alpha}\nonumber\\
 &= |h|^{M\alpha-1-\beta} \bigg|\frac{\partial D}{\partial x_{1}}(a_{1}+\theta h,\ldots, a_{n-1}) \bigg|^{M\alpha}\nonumber\\
\end{align}
for some $\theta\in (-1,1)$. By the continuity of $\frac{\partial D}{\partial x_{1}}$ and $M\alpha-1-\beta>0$,

\begin{equation}
\frac{\partial f}{\partial x_{1}}(a_{1},\ldots, a_{n-1}) = \lim_{h\to 0}\frac{f(a_{1}+h,\ldots, a_{n-1})-f(a_{1},\ldots, a_{n-1})}{h}=0.
\end{equation}
Similarly, $\frac{\partial f}{\partial x_{i}}(a_{1},\ldots, a_{n-1})=0$ for $1 \leq i \leq n-1$. By carrying out the above argument inductively, we have all of the partial derivatives at $x=a$ exists. Similar argument holds for the points on $T\setminus T_{\text{reg}}$ which are not the singularities of the real orbital integral. Therefore, $f$ is a smooth function on $\mathbb{R}^{n-1}$ and this completes the proof.
\end{proof}

\section{Remarks on general reductive groups}\label{Directions}

Now let $G$ be a reductive group over $\Q$. In this section we indicate how the preceding analysis can be extended to general $G$, though for general $G$ we only consider unstable elliptic orbital integrals. As before, $\gamma$ will be an elliptic element of $G(\Q)$, so that $G_\gamma(\Q)$ is a torus. Based on work of Ono \cite{Ono}, Shyr deduced a class number relation for tori. We briefly describe this, and refer to \cite[\S3]{Shy} for details.

\subsection{}
Consider $T=G_\gamma$ as an algebraic torus over $\Q$, and let $\hat{T}=\text{Hom}(T,\G_m)$ be the $\Z$-module of rational characters of $T$. The torus $T$ splits over a finite separable extension $K$ of $\Q$, and $\Gamma=\text{Gal}(K/\Q)$ acts on $\hat{T}$. Then $\hat{T}$ becomes a free $\Gamma$-module with rank $r=\dim(T)$, and denote by $\chi_T$ the character of the $\Gamma$-module. The character decomposes into
\be
\chi_T = \sum_{i=0}^h m_i \chi_i
\ee
for some integer $h$. Here $\chi_i$ are irreducible characters of $G$ with $\chi_0$ the principal character, and $m_i$ the multiplicity, whereby $m_0=r$. It follows then that the Artin $L$-function factorizes as
\be
L(s,\chi_T) = \zeta(s)^r\prod_{i=1}^h L(s,\chi_i)^{m_i}.
\ee
Moreover, for $i\geq2$, $L(1,\chi_i)$ is nonzero, so that the value
\be
\rho_T\ :=\ \lim_{s\to 1}(s-1)^rL(s,\chi_T) = \prod^h_{i=1} L(1,\chi_i)
\ee
is finite and nonzero, and is called the quasi-residue of $T$ over $\Q$. By \cite{Ono}, it is independent of choice of splitting field.

Now, choosing canonical Haar measures related to the Tamagawa numbers, Shyr obtains the relation
\be
\rho_T =  \frac{h_T R_T}{\tau_T w_T D_T^{1/2}}
\ee
where $\tau_T$ is the Tamagawa number of $T$, and the other $h_T,R_T,w_T,$ and $D_T$ are arithmetic invariants of $T$ defined analogous to those appearing in Dirichlet's class number formula \eqref{eqn cnf}.

Then one may proceed as in Section \ref{prelim}, and in particular, using \eqref{CNFA} to write the volume term as the value at $1$ of an Artin $L$-function, and apply the approximate functional equation. Then by similar estimates in Section \ref{Smoothing} the singularities of the real orbital integral may be smoothed.

\begin{rem}
In the case of $G=GL(n)$ the element $\gamma$ is elliptic and defines a degree $n$ extension $E$ over $\Q$, and the torus is simply the Weil restriction Res$_{E/\Q}(\G_m),$ split by $K$. By the remark following \cite[Theorem 1]{Shy}, one indeed recovers
\be
\rho_T = \res_{s=1}\zeta_E(s),
\ee
recovering the original case, and in particular the analytic class number formula.
\end{rem}



\section{Applications and obstructions}\label{sec6}

In this section, we discuss how the main theorem generalizes Altu\u{g}'s analysis to $GL(n)$, and the briefly describe the remaining issues to be overcome in order to apply Poisson summation in this case: absorbing the $p$-adic orbital into the volume factor, and controlling the Fourier transforms in the dual sum.

\subsection{The $p$-adic orbital integrals}

The main obstruction relates to the $p$-adic orbital integrals. By Equation (59) in \cite[\S2.5]{Lan1}, the product of the $p$-adic orbital integrals for $GL(2)$ can be expressed using the Kronecker symbol,

\be
\label{padicorbs}
\prod_q \int_{G_{\gamma}(\Q_{q}) \backslash G(\Q_{q})} f_{q}(g^{-1} \gamma g) d\bar{g}_{q}
 = \
\sum_{f|s_{\gamma}} f \prod_{q|f} \left(1-\left(\frac{D}{q}\right)\frac{1}{q}\right).
\ee
Then by \cite[\S 2.2.2]{Alt1}, this can be combined with global volume factor by a change of variables to give

\begin{align}
&\sqrt{|D|} L\left(1,\left(\frac{D}{\cdot}\right)\right) \sum_{f|s} f \prod_{q|f} \left(1-\left(\frac{D}{q}\right)\frac{1}{q}\right) =  \sum_{f|s}\frac{1}{f}L\left(1,\left(\frac{(m^2-N)/f^2}{\cdot}\right)\right),
\end{align}
where $s^2D=m^2-N$ and $D$ a discriminant of a quadratic number field. Then, the sum of $L$-functions is itself shown to be an $L$-function,
\be
\label{lfunsum}
L(1,m^2-N)=\sum^{'}\frac{1}{f}L\left(1,\left(\frac{(m^2-N)/f^2}{\cdot}\right)\right)
\ee
where the sum is taken over those $f$ satisfying $f^2 \ | \ M^2-N$ and $(m^2-N)/f^2 \equiv 0, 1 \ (\bmod\, 4)$, and the approximate functional equation is applied to this final form.

In order to generalize this to other groups, one would need an expression for the $p$-adic orbital integrals related to the Artin representation $\chi_T$ as in Section \ref{Directions}. Unfortunately, for general groups a closed formula is not known for these integrals, though it is interesting to note that by \cite{Ngo}, and not to mention \cite{Hal}, one knows that evaluating such $p$-adic orbital integrals is closely related to counting points on varieties over finite fields.

\begin{rem}
In the case of $GL(3)$ the $p$-adic orbital integrals for the unit element of the Hecke algebra have in fact been computed explicitly by Kottwitz \cite[p.661]{Kot}. If the elliptic element $\gamma=\alpha+\pi^n\beta$ generates an unramified cubic extension, one has
\be
\frac{p^{3n+1}(p+1)(p^2+p+1)-3p^{2n}(p^2+p+1) +3}{(p-1)^2(p+1)},
\ee
while for a ramified cubic extension one has
\be
\frac{p^{3n+1}(p+1)p^{1+\text{val}(\beta)}-p^{2n}(p^2+(p+1)p^{2\text{val}(\beta)})+1}{(p-1)^2(p+1)},
\ee
where $\mathrm{val}(\beta)=1$ or 2, but it does not seem clear what the analogous identity for \eqref{padicorbs} might be.
\end{rem}

\subsection{Poisson summation}

Supposing for the moment that we had such an expression like (\ref{lfunsum}) for $G=GL(n)$, whereby the elliptic orbital integral
\[
\sum_{\gamma \text{ ell}} \operatorname{meas}(\gamma)  \int_{G_{\gamma}(\A) \backslash G(\A)} f(g^{-1} \gamma g) dg,
\]
which we expressed as
\be
\sum_{\pm p^{k}} \sideset{}{'}\sum
\frac{1}{|s_{\gamma}|}L(1,\sigma_{E})\theta_{\infty}^\pm(\gamma) \prod_{q}\operatorname{Orb}(f_{q};\gamma),
\ee
can be simplified into
\be
\sum_{\pm p^{k}} \sideset{}{'}\sum\frac{1}{|s_{\gamma}|}L(1,\sigma'_{E})\theta_{\infty}^\pm(\gamma)
\ee
where $\sigma'_E$ is another degree $n-1$ Artin representation related to the original $\sigma_E$. Then we could apply the approximate functional equation to $L(1,\sigma_E')$, and thus Theorem \ref{smoothe} to the resulting expression to allow for Poisson summation.

Next, we formally complete the inner sum as in \cite{Alt1} by adding and subtracting the missing terms, the new subtracted terms either vanishing in the limit (\ref{rtrace}) or need to be treated separately.\footnote{We thank the reviewer for pointing this out to us.} In order for the Poisson summation formula over the inner sum on $\Z^{n-1}$ to be valid, we must control the Fourier transform of the dual sum. For this, we would require a more precise characterization of the singularities of the orbital integrals than that discussed in Section \ref{sing}.

Besides the main obstacles anticipated and addressed in \cite{Alt1, Alt2} for applying the Poisson summation (i.e., absolute convergence of $L$-series, summation over incomplete lattice and singularities contributed by the real orbital integral),there is one extra problem appearing in the higher rank cases which we would like to highlight:

We first state the version of equation (\ref{coeff}) in \cite{Alt1, Alt2} with the missing terms correspond to $D_{\gamma}$ being perfect squares being added back in is:
\begin{align}\label{alt}
\sum_{\mp}\sum_{f=1}^{\infty} \frac{1}{f} \sum_{\ell=1}^{\infty} \frac{1}{\ell} \sideset{}{'}\sum_{\substack{m\in\mathbb{Z}\\ f^2  | m^2\pm 4p^k }}\left(\frac{(m^2\pm 4p^k)/f^2}{\ell}\right)\theta_{\infty}^{\mp}\left(\frac{m}{2p^{k/2}}\right)\times\nonumber\\ \left[F\left(\frac{\ell f^2}{|m^2\pm 4p^k|^{\alpha}}\right)+\frac{\ell f^2}{\sqrt{|m^2\pm 4p^k|}} H\left(\frac{\ell f^2}{|m^2\pm 4p^k|^{1-\alpha}}\right)\right],
\end{align}

\noindent where $F$ and $H$ are some Schwartz function, $\alpha\in (0,1)$, and $'$ in the summation sign means we only sum over $m$ such that $(m^2\pm 4p^{k})/f^2 \equiv 0,1\  (\bmod\, 4)$.

Not only are the coefficients of the $L$-series explicitly given by the Kronecker symbol, they are also \textit{periodic}. If we want to apply Poisson summation to the $m$-sum, the summands have to be defined on $\R$, smooth and with good decay. Of course, the Kronecker symbol is only defined for $m\in \mathbb{Z}$. Altu\u{g} crucially uses the periodicity to split  the $m$-sum, bringing the Kronecker symbol out of the sum: \footnote{\ There is a slight mistake in \cite[pp. 1807]{Alt1}. After taking the Kronecker symbol out of the $m$-sum, it should no longer depend on $m$. It should be $\left(\frac{(a^2\pm 4p^k)/f^2}{\ell}\right)$ instead, as in equation (\ref{correct}). Also, for the same equation in \cite[p. 1796]{Alt1},  $'$ on the summation sign should refer to summing over those $f$ such that $f^2 | m^2 \pm 4p^k$ and $(m^2\pm 4p^k)/f^2 \equiv 0, 1 \ (\bmod\, 4) $ instead of $f| m^2 \pm 4p^k$ and $(m^2\pm 4p^k)/f^k \equiv 0, 1 \ (\bmod\, 4)$.}

\begin{align}\label{correct}
&\sum_{\substack{a \bmod 4\ell f^2\\ a^2 \pm 4p^k \equiv 0 \ \bmod\, f^2\\ (a^2\pm 4p^k)/f^2 \equiv 0,1 \bmod\, 4}} \sum_{\substack{m\in \Z \\ m \equiv a \bmod\, 4\ell f^2}} \left(\frac{(m^2\pm 4p^k)/f^2}{\ell}\right) \theta_{\infty}^{\mp}\left(\frac{m}{2p^{k/2}}\right)G_{\ell, f}^{\pm}(m) \nonumber\\
= & \sum_{\substack{a \bmod 4\ell f^2\\ a^2 \pm 4p^k \equiv 0 \ \bmod\, f^2\\ (a^2\pm 4p^k)/f^2 \equiv 0,1 \bmod\, 4}} \left(\frac{(a^2\pm 4p^k)/f^2}{\ell}\right) \sum_{\substack{m\in \Z \\ m \equiv a \bmod\, 4\ell f^2}}  \theta_{\infty}^{\mp}\left(\frac{m}{2p^{k/2}}\right)G_{\ell, f}^{\pm}(m),
\end{align}
where $G_{\ell, f}^{\pm}(m)$ denotes the bracketed term in the $m$-sum of equation (\ref{alt}). But in general for $GL(n), n\ge 3$, this only works for elliptic $\gamma$ defining an abelian extension, which is a particularly special case.


\ \\
\end{document}